\documentclass[12pt]{amsart}
\usepackage{graphicx} 

\usepackage[top=1in, bottom=1.25in, left=1in, right=1in]{geometry}
\usepackage{amssymb}
\usepackage{amsmath}
\usepackage{hyperref}
\newtheorem{theorem}{Theorem}[section]
\newtheorem{lemma}[theorem]{Lemma}
\newtheorem{prop}[theorem]{Proposition}

\newtheorem{cor}[theorem]{Corollary}

\theoremstyle{definition}
\newtheorem{defi}[theorem]{Definition}
\theoremstyle{remark}

\DeclareMathOperator{\diam}{diam}%
\newcommand{\map}[3]{#1\colon #2\to #3}
\newcommand{\field}[1]{\mathbb{#1}}
\newcommand{\rr}{\field{R}}
\newcommand{\zz}{\field{Z}}
\newcommand{\st}{\;|\;}
\newcommand{\kre}[1]{\overline{#1}}
\newcommand{\eps}{\varepsilon}
\newcommand{\dpn}{d_{PN}}
\newcommand{\dps}{d_{PS}}
\newcommand{\dwp}{d_{WP}}
\newcommand{\cX}{\mathcal{X}}
\newcommand{\cY}{\mathcal{Y}}
\newcommand{\Sig}{\Sigma}
\newcommand{\mcg}{\mathrm{Mod}}
\newcommand{\pants}{\mathrm{Pants}}
\newcommand{\pantsV}{\mathrm{Pants^0}}
\newcommand{\pantsN}{\pants(N)}
\newcommand{\pantsNV}{\pantsV(N)}

\newcommand{\pantsS}{\pants(S)}
\newcommand{\pantsSV}{\pantsV(S)}
\newcommand{\pantsSg}{\pantsS}
\newcommand{\pantsSig}{\pants^\sigma(S)}
\newcommand{\pantsSgSig}{\pantsSig}
\newcommand{\Teich}{\mathrm{Teich}}
\newcommand{\TS}{\Teich(S)}
\newcommand{\TN}{\Teich(N)}
\newcommand{\TSg}{\TS}
\newcommand{\TsSig}{\Teich^{\sigma}(S)}
\newcommand{\TsgSig}{\TsSig}

\newcommand{\Q}{\Phi}
\newcommand{\J}{j}
\newlength{\customwidth}
\newcommand{\mathdash}{\relbar\mkern-4mu\relbar}

\newcommand{\plainMove}[2]{\,\ooalign{$#2$\cr
    \hidewidth\raisebox{1.15ex}{\text{\tiny {\ #1}}} 
    \hidewidth\cr}\,}
 
\newcommand{\mvI}{\plainMove{\;1}{\mathdash}}  
  
\newcommand{\mvIII}{\plainMove{\,3}{\mathdash}}  
\newcommand{\mvIV}{\plainMove{4}{\mathdash}}

\begin{document}
\title[The pants graph as a combinatorial model\ldots]{The pants graph as a combinatorial model for the Teichm\"uller space of a non-orientable surface}
\author{Micha{\l}\ Stukow, B{\l}a{\.z}ej\ Szepietowski, Jakub Szmelter-Tomczuk}
\address{Institute of Mathematics, Faculty of Mathematics, Physics and Informatics, University of Gda\'nsk, 80-308 Gda\'nsk, Poland}
\email{michal.stukow@ug.edu.pl}
\email{blazej.szepietowski@ug.edu.pl}
\email{jakub.szmelter-tomczuk@ug.edu.pl}


\numberwithin{equation}{section}

\begin{abstract}
We study the relation between the pants graph of a non-orientable surface and that of its orientable double cover. 
Given a non-orientable surface $N$ with orientable double cover $\pi\colon S\to N$, we construct a natural map between pants graphs induced by lifting pants decompositions. We prove that this map defines a quasi-isometric embedding of $\pantsN$ into $\pantsS$. Using Brock’s quasi-isometry between $\pantsS$ and Teichmüller space $\TS$ endowed with the Weil–Petersson metric, together with the identification of the Teichmüller space of $N$ with the fixed-point locus of the deck involution on $\TS$, we prove that $\pantsN$ is quasi-isometric to the Teichmüller space of $N$ equipped with the induced Weil–Petersson metric.
\end{abstract}

\maketitle

\section{Introduction}
The pants complex of an orientable surface was introduced by Hatcher and Thurston~\cite{HatcherThurston1980}, and its structure has since been extensively studied in connection with Teichmüller spaces and mapping class groups. The $1$-skeleton of the pants complex is the pants graph, whose vertices are pants decompositions and whose edges correspond to elementary moves. 
Brock  \cite{Brock2003} showed that the pants graph is quasi-isometric to the Teichmüller space equipped with the Weil–Petersson metric, while Margalit \cite{Margalit2004} proved that its automorphism group is naturally isomorphic to the extended mapping class group. In the non-orientable setting, Papadopoulos and Penner \cite{Papadopoulus2016} introduced an analogous pants graph and established several of its basic properties.  The purpose of the present paper is to investigate the large-scale geometry of pants graphs of non-orientable surfaces and their relation to the Weil–Petersson geometry of the corresponding Teichm\"uller spaces.

Throughout, $S_{g,n}$ (respectively $N_{g,n}$) denotes an orientable (respectively non-orientable) topological surface of genus $g$ with $n$ punctures. Let $\pi\colon S\to N$ be the orientable double cover of $N=N_{g,n}$, where $S=S_{g-1,2n}$ and let $\sigma$ denote the orientation-reversing deck involution of $S$. We denote by $\pantsN$ and $\pantsS$ the  pants graphs of $N$ and $S$, equipped with their graph metrics $\dpn$ and $\dps$, respectively. Our first main result establishes a large-scale geometric relationship between  these two metric spaces.
\begin{theorem}\label{main_pantsN_pantsS}
The orientable double cover $\pi\colon S\to N$ induces
a quasi-geometric embedding 
\[\map{\J}{(\pantsNV,\dpn)}{(\pantsSV,\dps)},\quad \J(P)=\pi^{-1}(P),\]
where $\pants^0(\cdot)$ denotes the vertex set of the corresponding pants graph.
\end{theorem}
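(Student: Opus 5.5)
The plan is to prove the two inequalities of a quasi-isometric embedding separately,
\[
\dps(\J(P),\J(Q))\le A\,\dpn(P,Q)
\quad\text{and}\quad
\dpn(P,Q)\le B\,\dps(\J(P),\J(Q))+C .
\]
As a preliminary, I would verify that $\J$ is well defined, i.e.\ that $\pi^{-1}(P)$ is a pants decomposition of $S$. I would do this by looking at the components of $N\setminus P$, which are pairs of pants or one-holed M\"obius-band/Klein-bottle type pieces depending on the Papadopoulos--Penner conventions. Their preimages are either two disjoint copies of a pair of pants, or a connected orientable double cover of Euler characteristic $-2$ and hence a four-holed sphere or one-holed torus. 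In the latter case $\pi^{-1}(P)$ must be supplemented. So I would first check from the definition of $\pantsN$ which pieces occur, and if necessary define $\J(P)$ as $\pi^{-1}(P)$ completed by a canonical choice of a $\sigma$-invariant core curve. Any two such choices differ by a bounded number of moves, so this affects only additive constants.

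For the upper bound, it suffices to treat $P,Q$ joined by an edge, that is, differing by one elementary move of $N$. Such a move is supported in a subsurface $W\subset N$ of complexity one. Then $\pi^{-1}(W)$ is either two disjoint copies of $W$, or a connected $\sigma$-invariant surface of bounded complexity. In the first case the move lifts to two elementary moves in $S$. In the second case $\J(P)$ and $\J(Q)$ agree outside $\pi^{-1}(W)$, and inside it they are pants decompositions of a surface from a finite list of topological types. Moreover, the intersection number of $\pi^{-1}(P\cap W)$ with $\pi^{-1}(Q\cap W)$ is bounded, being at most twice the bounded intersection of the two curves involved in the move. Since there are finitely many such configurations up to homeomorphism, their pants distance is bounded by a uniform constant $A$. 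The triangle inequality along a geodesic in $\pantsN$ then gives the upper bound.

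The lower bound is the main obstacle: a geodesic in $\pantsS$ between $\sigma$-invariant decompositions need not pass through $\sigma$-invariant ones, and $\pi(Q_i)$ of an intermediate vertex is not a multicurve. I would pass through Teichm\"uller space instead. Under Brock's quasi-isometry $\pantsS\to\TS$, choose $X_P\in\TsSig$ on which $\J(P)$ is very short. By Brock, $d_{WP}(X_P,X_Q)\le B'\dps(\J(P),\J(Q))+C'$. The deck involution acts as a WP isometry, and the WP metric is uniquely geodesic (CAT(0), geodesics staying in the interior between interior points). Hence the WP geodesic $\gamma$ joining $X_P$ and $X_Q$ is $\sigma$-invariant, so it lies in $\TsSig$, which is identified with $\TN$.

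Parametrize $\gamma$ and take points $Y_0,\dots,Y_m$ at WP distance at most $1$ apart, with $m\le d_{WP}(X_P,X_Q)+1$. For each $Y_k$, viewed as a hyperbolic structure on $N$, take a Bers-short pants decomposition $P_k$ of $N$, using a Bers constant for $N$ obtained by lifting to $S$. Take $P_0=P$ and $P_m=Q$, which is legitimate since $P$ and $Q$ are short at the endpoints. Then I would argue as in Brock: along a WP path of length at most $1$ lengths of short curves change by bounded factors, so $i(P_k,P_{k+1})$ is uniformly bounded. By finiteness modulo $\mcg(N)$, pants decompositions of $N$ with bounded intersection are at bounded $\dpn$-distance. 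Summing over $k$ gives
\[
\dpn(P,Q)\le D\,m\le D\bigl(B'\dps(\J(P),\J(Q))+C'+1\bigr).
\]
The delicate points I expect to check carefully are the uniform length control along WP segments (Brock's estimate, transferred verbatim since $\TsSig\subset\TS$ carries the restricted metric) and the $\sigma$-invariance of the geodesic. If appealing to the WP geometry is to be avoided here, the fallback would be a combinatorial coarse retraction $\pantsS\to\pantsN$, but I expect that to be considerably harder.
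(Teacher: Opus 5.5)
Your route is the paper's. The upper bound comes from lifting elementary moves; the paper does the case analysis explicitly and gets the constant $3$. The lower bound combines four ingredients: the Lipschitz bound for Brock's map, the $\sigma$-invariance of the unique WP geodesic (Proposition~\ref{Ts:sigma:convex}), a covering of that geodesic by short $\sigma$-invariant pants decompositions, and the collar lemma plus finiteness to turn bounded intersection into bounded $\dpn$-distance (Lemma~\ref{lem:a}).

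\textbf{The gap.} The step where you space the points $Y_k$ at WP distance $1$ fails as stated. It is not true that lengths of $L$-short curves change by bounded factors along WP paths of length at most $1$:
\begin{itemize}
\item The strata of the WP completion where a curve $\gamma$ is pinched lie at finite distance, roughly $\sqrt{2\pi\,\ell_\gamma(X)}$ from $X$.
\item Twisting about a nearly pinched $\gamma$ is almost free: a full Dehn twist costs on the order of $\ell_\gamma^{3/2}$.
\item Hence if $\ell_\gamma(X)$ is small enough, $\dwp(X,T_\gamma^nX)<1$ for every $n$. In the symmetric setting, use the lift of a Dehn twist of $N$, a product of twists about $\gamma$ and $\sigma(\gamma)$.
\item A curve $\alpha$ crossing $\gamma$ can have length less than $L$ on $X$ once $L$ is large compared with the width of the collar about $\gamma$. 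You cannot exclude this, since $L$ may always be enlarged. Yet the length of $\alpha$ on $T_\gamma^nX$ tends to infinity.
\end{itemize}
So two pants decompositions that are $L$-short at points less than $1$ apart can have arbitrarily large intersection number. Your claimed uniform bound on $i(P_k,P_{k+1})$, and with it your bound on $\dpn(P_k,P_{k+1})$, is unjustified as written.

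\textbf{The repair.} It is exactly Brock's Lemma 3.4, the paper's Lemma~\ref{lem:b}. Since the WP gradient of $\ell_\alpha$ is bounded in terms of $\ell_\alpha$, there is $\eps=\eps(L,L')>0$ such that a curve of length $<L$ at $Y$ has length $<L'$ on the WP $\eps$-ball about $Y$. Space the $Y_k$ at distance $\eps$, so that $m\le \dwp(X_P,X_Q)/\eps+1$. Then $P_k$ and $P_{k+1}$ are both $L'$-short at $Y_{k+1}$, and the collar lemma bounds $i(P_k,P_{k+1})$. This only changes constants.

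\textbf{Two further points.}
\begin{itemize}
\item You need a uniform $L$ such that every $X\in\TsSig$ carries an $L$-short $\sigma$-invariant pants decomposition. This does not follow by lifting from Bers' theorem on $S$: a Bers-short decomposition of $S$ is generally not $\sigma$-invariant and does not project to a multicurve on $N$. You need Sepp\"al\"a's theorem (Theorem~\ref{Bers-const}), or a rerun of Bers' argument on the quotient $X/\sigma$.
\item In the paper's conventions, pants decompositions of $N$ may contain one-sided curves. Then every component of $N\setminus P$ is a pair of pants, so $\pi^{-1}(P)$ is already a pants decomposition of $S$ (Lemma~\ref{J:bijection}) and no completion is needed. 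Incidentally, the connected orientable double cover of $N_{2,1}$ is $S_{1,2}$, not a one-holed torus.
\end{itemize}
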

The image of $\J$ consists precisely of the $\sigma$-invariant pants decompositions of $S$ and will be denoted by 
$\pantsSig$. Endowed with the metric induced by $\dps$, the space $\pantsSig$ is quasi-isometric to $\pantsN$. We emphasize, however, that $\pantsSig$ is not itself a subgraph of $\pantsS$, and therefore the induced metric is not a graph metric in the usual sense.

Theorem \ref{main_pantsN_pantsS} should be compared with analogous results for curve complexes and mapping class groups. Masur and Schleimer \cite{MasurShleimer2013} established a corresponding relation between the curve complexes of non-orientable surfaces and their orientable double covers, while Katayama and Kuno \cite{KatayamaKuno2024} obtained similar comparison at the level of mapping class groups.

The proof of Theorem \ref{main_pantsN_pantsS} splits naturally into two parts. For $P,P'\in\pantsNV$ we seek linear upper and lower bounds for $\dps(\J(P),\J(P'))$ in terms of $\dpn(P,P')$. 

The upper bound follows from a direct analysis of elementary moves: if $P$ and $P'$ differ by an elementary move in $\pantsN$, then their lifts can be connected in $\pantsS$ by at most three 
moves. This immediately implies that $\J$ is $3$-Lipschitz.

The existence of the lower bound is considerably more subtle. Indeed, it implies that every geodesic path in $\pantsS$ with endpoints in $\pants^\sigma(S)$ can be replaced by a sequence of symmetric vertices with the same endpoints, whose length is coarsely proportional to that of the original geodesic and whose projections form a path in $\pantsN$. Note, however, that such a sequence does not in general define a path in $\pantsS$,


Our proof of the lower bound relies on the Weil–Petersson geometry of Teich\"uller space $\TS$ and Brock's quasi-isometry theorem \cite{Brock2003}. The involution $\sigma$ induces an isometric action on $\TS$, whose fixed-point set $\TsSig$ is naturally identified with $\TN$. Following Brock's construction, and incorporating a result of Sepp\"al\"a \cite{Seppala1991}, we  define a map \[\Q\colon\pants^\sigma(S)\to\Teich^\sigma(S),\]
and we show that the composition $\Q\circ j$ is a quasi-isometry. This implies that $j$ is indeed a quasi-isometric embedding and it yields our second main theorem.
\begin{theorem}\label{main_pantsN_Teich}
The pants graph $\pants(N)$ is quasi-isometric to $\Teich^\sigma(S)$ endowed with the induced Weil-Petersson metric.      
\end{theorem}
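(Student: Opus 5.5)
The plan is to show that the composition $\Q\circ\J\colon \pantsNV\to\TsSig$ is a quasi-isometry for $\dpn$ on the source and the induced Weil--Petersson metric $\dwp$ on the target. Since $\pantsNV$ is coarsely dense in $\pantsN$, this proves Theorem~\ref{main_pantsN_Teich}.

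The map $\Q$ is built as in Brock. Let $L$ be a Bers constant for $S$. For a $\sigma$-invariant pants decomposition $\J(P)$, take $\Q(\J(P))$ to be a $\sigma$-invariant hyperbolic structure in which every curve of $\J(P)$ has length at most $L$. Such a structure exists by Sepp\"al\"a's result: a symmetric Riemann surface admits a $\sigma$-invariant pants decomposition of bounded length. Conversely, it exists because one can choose symmetric Fenchel--Nielsen coordinates adapted to $\J(P)$ with all lengths equal to a fixed value and twists compatible with $\sigma$. Then $\Q(\J(P))$ lies in the Brock region $V(\J(P))=\{X\in\TS : \ell_X(\alpha)\le L \text{ for all } \alpha\in\J(P)\}$. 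Brock's map $\TS\to\pantsS$ sends $X$ to any Bers-short decomposition and is a quasi-isometry; it coarsely inverts the map sending a decomposition to a point of its region. Hence $\Q$ agrees up to bounded error with the restriction to $\pants^\sigma(S)$ of a coarse inverse of Brock's quasi-isometry.

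The upper bound follows from this. The map $\J$ is $3$-Lipschitz by the elementary-move analysis already described, and Brock's inverse is coarsely Lipschitz. Since the metric on $\TsSig$ is the induced one, $\dwp(\Q\J(P),\Q\J(P'))\le A\,\dpn(P,P')+B$.

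Coarse surjectivity comes from Sepp\"al\"a again. Every $X\in\TsSig$ has a $\sigma$-invariant pants decomposition $Q=\J(P)$ of length at most $L$, so $X\in V(Q)$. Both $X$ and $\Q(Q)$ lie in $V(Q)$, which has uniformly bounded WP diameter by Brock. The fixed locus $\TsSig$ is totally geodesic, being the fixed set of an isometry of a uniquely geodesic CAT(0) space, so the induced metric equals the restriction of $\dwp$. Hence $X$ is within bounded distance of $\Q(\J(P))$.

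The main obstacle is the lower bound $\dpn(P,P')\le A\,\dwp(\Q\J(P),\Q\J(P'))+B$, because Brock's estimate only controls $\dps$, not $\dpn$. I would argue directly inside $\TsSig$ instead of passing through $\pantsS$. Join $X=\Q\J(P)$ and $X'=\Q\J(P')$ by the WP geodesic $\gamma$. By total geodesicity and uniqueness, $\gamma$ lies in $\TsSig$. Subdivide $\gamma$ into $\lceil \dwp(X,X')\rceil$ segments of length at most $1$, and at each subdivision point $X_i$ choose by Sepp\"al\"a a $\sigma$-invariant Bers-short decomposition $\J(P_i)$. Consecutive points are WP-close, so curve lengths change by a bounded factor along each segment (Wolpert's length estimates). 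The intersections $i(\J(P_i),\J(P_{i+1}))$ are therefore uniformly bounded, and projecting to $N$ bounds $i(P_i,P_{i+1})$. A uniform intersection bound implies a uniform bound on $\dpn(P_i,P_{i+1})$: only finitely many $\mcg(N)$-orbits of such pairs exist, and $\pantsN$ is connected (Papadopoulos--Penner). Summing over the segments gives the linear lower bound.

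The delicate points are the WP-completion issue, since $\gamma$ may pass through the completion strata where curves are pinched, and the bounded-length control there. I would handle these as Brock does, using the bounded length of the Bers-short curves near pinching and Wolpert's estimates, now restricted to the symmetric locus. Combining the two bounds with coarse surjectivity shows that $\Q\circ\J$ is a quasi-isometry. As a byproduct, $\J$ is a quasi-isometric embedding.
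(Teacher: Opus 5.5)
Your proposal is correct and is essentially the paper's proof. It uses the same map $\Q\circ\J$, gets the upper bound from the $3$-Lipschitz property of $\J$ and Brock's Lipschitz bound, gets density from Sepp\"al\"a's theorem and the diameter bound on sublevel sets, and gets the lower bound from the symmetric version of Brock's covering argument (Lemmas~\ref{lem:b} and~\ref{lem:a}); your finiteness of $\mcg(N)$-orbits of bounded-intersection pairs plays the role of the paper's twist-group argument.

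The one step to correct is the claim that lengths change by a bounded factor along segments of WP-length $1$. Wolpert's gradient estimates only guarantee that curves of $\J(P_i)$ stay shorter than a fixed $L'$ along segments of some length $\eps=\eps(L,L')>0$. Indeed, a curve crossing some $\alpha\in\J(P_i)$ can be short enough to be pinched within WP-distance less than $1$, which sends $\ell(\alpha)$ to infinity. So subdivide at scale $\eps$, as in Lemma~\ref{lem:b}; the number of pieces is still linear in $\dwp(X,X')$.

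Three smaller points. Take $L$ to be Sepp\"al\"a's constant (enlarged to dominate a Bers constant), not merely a Bers constant. At the endpoints, use $\J(P)$ and $\J(P')$ themselves. Your worry about the completion is moot, since WP geodesics between points of $\TS$ stay in $\TS$, as used in Proposition~\ref{Ts:sigma:convex}.
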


Since $\TsSig$ is naturally identified with $\TN$, Theorem \ref{main_pantsN_Teich} provides a direct analogue of Brock’s quasi-isometry theorem in the non-orientable setting. In particular, it shows that $\pantsN$ serves as a coarse combinatorial model for $\TN$ endowed with the metric induced from the Weil–Petersson metric on $\TS$. 

A remark is in order regarding Brock’s theorem: although the result in \cite{Brock2003} is stated for closed surfaces, the proof extends without essential modification to punctured surfaces, and this generality is used here.

Since $\pants(N_{1,2})$ consists of two vertices connected by an edge, and $\pants(N_{2,1})$ is a graph of diameter 2 (see \cite{Papadopoulus2016} or \cite{stukow2025}), as an immediate application of Theorem~\ref{main_pantsN_Teich}, we obtain the following corollary.
\begin{cor}
    The Teichmüller spaces of $N_{1,2}$ and $N_{2,1}$, equipped with the induced Weil-Petersson metric, are each quasi-isometric to a point.
\end{cor}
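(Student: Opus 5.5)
The corollary follows from Theorem~\ref{main_pantsN_Teich} together with the fact that quasi-isometry preserves boundedness. I will first recall from \cite{Papadopoulus2016} (or \cite{stukow2025}) the explicit descriptions of the two pants graphs. The graph $\pants(N_{1,2})$ has exactly two vertices joined by an edge, so its diameter is $1$. The graph $\pants(N_{2,1})$ has diameter $2$. Both are nonempty metric spaces of finite diameter. Here one should make sure that the surfaces $N_{1,2}$ and $N_{2,1}$ fall within the range where Theorem~\ref{main_pantsN_Teich} applies. Their double covers are $S_{0,4}$ and $S_{1,2}$, and both have nonempty pants graphs to which Brock's theorem, in its punctured version, applies. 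So the standing hypotheses are satisfied.

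Next, Theorem~\ref{main_pantsN_Teich} gives a quasi-isometry $f\colon \pants(N)\to \Teich^\sigma(S)$. This means there are constants $K\ge1$ and $C\ge0$ such that
\[K^{-1}d(x,y)-C\le d_{WP}(f(x),f(y))\le K d(x,y)+C,\]
and every point of $\Teich^\sigma(S)$ lies within distance $C$ of the image of $f$. Suppose $\pants(N)$ has diameter $D$. Then the image of $f$ has diameter at most $KD+C$, and by coarse surjectivity $\Teich^\sigma(S)$ has diameter at most $KD+3C$. Under the natural identification of $\TsSig$ with $\TN$, the Teichmüller space of $N$ therefore has finite diameter in the induced Weil--Petersson metric.

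Finally, any nonempty metric space $X$ of finite diameter $D'$ is quasi-isometric to a point. Send $X$ to the point, and send the point to any $x_0\in X$. These are quasi-inverse with additive constant $D'$, and the map into $X$ is coarsely surjective with constant $D'$. Applying this to $\TN$ for $N=N_{1,2}$ and $N=N_{2,1}$ finishes the proof.

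None of these steps is a real obstacle, since all the content sits in Theorem~\ref{main_pantsN_Teich} and in the computation of the two pants graphs. The only point needing care is checking that the hypotheses of Theorem~\ref{main_pantsN_Teich}, and of Brock's theorem in the punctured case, cover these low-complexity surfaces.
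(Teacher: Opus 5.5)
Your proposal is correct and follows the paper's own route. The paper derives the corollary as an immediate consequence of Theorem~\ref{main_pantsN_Teich}, using that $\pants(N_{1,2})$ is a single edge and $\pants(N_{2,1})$ has diameter $2$, so $\TsSig\cong\TN$ has finite Weil--Petersson diameter and is therefore quasi-isometric to a point. You make explicit the routine facts the paper leaves implicit: quasi-isometries preserve boundedness, and a nonempty bounded metric space is quasi-isometric to a point.
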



We conclude the introduction with a related open question motivated in part by the results of the present paper. Masur and Wolf \cite{MasurWolf2002} proved that the Weil–Petersson isometry group of $\TS$ is the extended mapping class group $\mcg(S)$. Brock and Margalit \cite{BrockMargalit2007} gave an alternative proof using Margalit’s description of automorphisms of the pants graph~\cite{KatayamaKuno2024}. Recently, the first two authors \cite{stukow2025} proved that the automorphism group of the pants graph of a non-orientable surface $N$ is isomorphic to its mapping class group $\mcg(N)$. Moreover, by a theorem of Birman and Chillingworth \cite{BirmanChillingworth1972}, $\mcg(N)$ is isomorphic to the quotient $\mcg^\sigma(S)/\langle\sigma\rangle$, where $\mcg^\sigma(S)$ is the centraliser of $\sigma$ in $\mcg(S)$.
This leads naturally to the following open question.

\begin{quote}
Is the Weil–Petersson isometry group of $\TsSig$ isomorphic to $\mcg(N)$?    
\end{quote}

The paper is organized as follows. In Section~2 we recall the necessary background on pants graphs and Teichm\"uller spaces. In Section~3 we define the map $\J$ and prove the $3$-Lipschitz bound. Section~4 introduces symmetric sublevel subsets of $\TsSig$, which are used in Section~5 to construct the map $\Q$ and prove Theorems~\ref{main_pantsN_pantsS} and~\ref{main_pantsN_Teich}.
%
\section{Preliminaries}
Let $(\cX,d_\cX)$,  $(\cY,d_\cY)$ be metric spaces. A map $\map{f}{\cX}{\cY}$ is a {\it quasi-isometric embedding} if there exits constants $A_1\ge 1$ and $A_2\ge 0$ such that 
\[\frac{1}{A_1}d_\cX(x,y)-A_2\le d_\cY(f(x),f(y))\le A_1d_\cX(x,y)+A_2\]
for all $x,y\in\cX$. If, in addition, $f(\cX)$ is $A_2$-{\it dense} in $\cY$ (every point of $\cY$ lies within distance $A_2$ of $f(\cX)$), then we say that $f$ is a {\it quasi-isometry}. 

Let $\Sig\in\{S_{g,n}, N_{g,n}\}$ be a surface of negative Euler characteristic. Recall that $\chi(S_{g,n})=2-2g-n$, whereas  $\chi(N_{g,b})=2-g-n$.  The {\it mapping class group} $\mcg(\Sig)$ of $\Sig$ is the group of isotopy classes
of all homeomorphisms of $\Sig$. If $\Sig$ is orientable, then $\mcg(\Sig)$ is usually called the extended
mapping class group.

By a {\it non-trivial curve} or simply a {\it curve} on $\Sig$ we understand an embedded simple closed curve that does not bound a disk with at most one puncture or a M\"obius band.
A curve on $\Sig$ is said to be \emph{two-sided} (resp. \emph{one-sided}) if its regular neighborhood is an annulus (resp. a M\"obius band). Abusing the language, we shall identify a curve on $\Sig$ with its isotopy class. By $i(\alpha,\beta)$ we denote the geometric intersection number of curves $\alpha$ and $\beta$. We say that $\alpha$ and $\beta$ are {\it disjoint} when $\alpha \ne \beta$ and $i(\alpha,\beta)=0$. A set of pairwise disjoint and non isotopic curves on $\Sig$ is called a {\it multicurve}.  For a multicurve $M$ on $\Sig$, we denote by $\Sig\setminus M$ the complement in $\Sig$ of the union of the elements of $M$. A connected component of $\Sig\setminus M$ is  {\it trivial} if it is homeomorphic to a pair of pants, i.e. a three-holed sphere. If all components of $\Sigma\setminus M$ are trivial, then $M$ is a {\it pants decomposition} of $\Sig$.

\subsection{The pants graph.}
 We say that two pants decompositions $P$ and $P'$ of $\Sig$ differ by an {\it elementary move} if $P'$ is obtained from $P$ by replacing a single curve $\alpha\in P$ by either a single curve $\beta\in P'$ or by a pair of disjoint one-sided curves $\beta_1, \beta_2\in P'$, in such a way that the geometric intersection number of $\alpha$ and $\beta$ (resp. $\alpha$ and  $\beta_1\cup\beta_2$) is minimal. The unique nontrivial component of $\Sig\setminus(P\setminus\{\alpha\})$ is called the  {\it support} of the elementary move. It is homeomorphic to one of the surfaces: $S_{1,1}$, $S_{0,4}$, $N_{1,2}$, $N_{2,1}$, which gives us four types of elementary moves -- see Figure \ref{fig:moves:all}, where the shaded discs represent cross-caps. Note that the support of each move can be identified with the regular neighbourhood $N(\alpha\cup \beta)$ or $N(\alpha\cup \beta_1\cup \beta_2)$ respectively.
\begin{figure}[h]
\begin{center}
\includegraphics[width=0.99\customwidth]{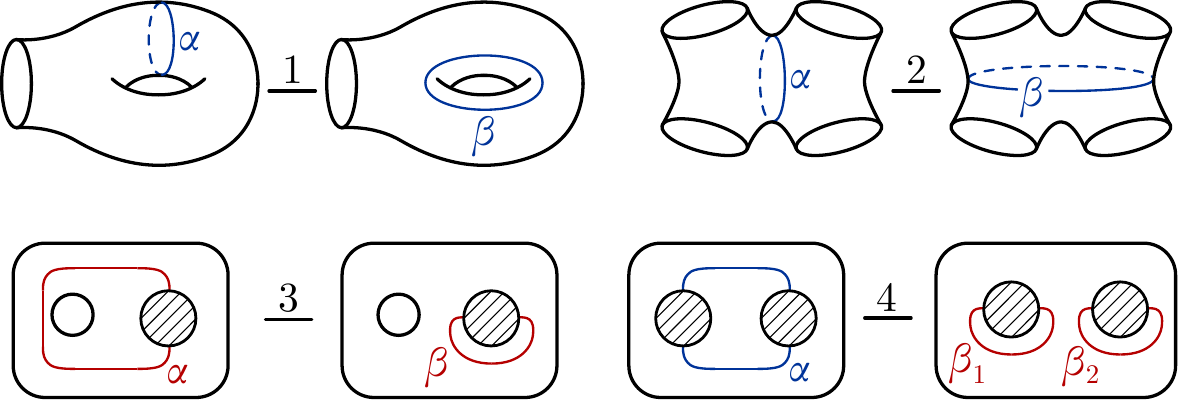}
\caption{The four types of elementary moves between pants decompositions.}\label{fig:moves:all} %
\end{center}
\end{figure}
The \emph{pants graph} $\pants(\Sig)$ of $\Sig$ is a graph whose vertex set $\pantsV(\Sig)$ is the set of all pants decompositions of $\Sig$, and whose edges correspond to elementary moves. 

The pants graph of an orientable surface is defined using only moves of type 1 and 2. Moves of type 3 and 4 are supported on non-orientable surfaces and involve one-sided curves. Note that a move of type 4 changes the number of curves in a pants decomposition, which reflects the fact that this number varies if the surface is non-orientable. More precisely, for every integer $m$ such that $0\le m\le g$ and $m\equiv g\pmod{2}$ there exits a pants decomposition of $N_{g,n}$ of cardinality 
\[\frac{3g+m}{2}+n-3\] containing exactly $m$ one-sided curves. It was shown in \cite{HatcherLochakSchneps} and \cite{Papadopoulus2016} that for every surface~$\Sig$, the corresponding pants graph $\pants(\Sig)$ is connected.

\subsection{The Weil-Petersson metric on the Teichm\"uller space} 
A point in the  {\it Teichm\"uller space} $\Teich(\Sigma)$ of $\Sigma$ is given by a pair
$(X,\varphi)$, where $X$ is a complete finite-area hyperbolic surface  and $\varphi\colon \Sigma \to X$ is a homeomorphism. Two points $(X_1,\varphi_1)$ and $(X_2,\varphi_2)$ are equivalent if there exists an isometry $\psi\colon X_1\to X_2$ such that $\psi \circ \varphi_1$ is isotopic to $\varphi_2$. 
There is a natural action of $\mcg(\Sigma)$ on $\Teich(\Sigma)$ defined by
$f\cdot(X,\varphi)=(X,\varphi\circ f^{-1})$ for $f\in\mcg(\Sigma)$.

We refer the reader to  \cite{Brock2003} for the definition of the Weil–Petersson metric on the Teichm\"uller space of an orientable surface. For non-orientable surfaces, the Weil–Petersson metric can be defined via the orientable double cover.
Let $\map{\pi}{S}{N}$ be the orientable double cover, with $S=S_{g-1,2n}$ and $N=N_{g,n}$, and let $\sigma$ denote the orientation-reversing deck involution. The action of $\sigma$ on $S$ induces an involution of  $\TS$, whose fixed-point locus, denoted by $\TsSig$, is naturally identified with $\TN$ (see Chapter 4 of \cite{SeppalaSorvali1992}). We equip $\TN$ with the Weil–Petersson metric induced from $\TS$, namely the restriction of the Weil–Petersson metric $\dwp$ on $\TS$ to the submanifold $\TsSig$.
%
\begin{prop}\label{Ts:sigma:convex} 
$\TsSig$ is a convex subset of $\TS$.
\end{prop}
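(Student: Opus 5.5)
We would prove Proposition~\ref{Ts:sigma:convex} by combining three known facts about the Weil--Petersson metric with the fact that $\sigma$ acts on $\TS$ as an isometry. Here convexity means that for any two points $X,Y\in\TsSig$, the Weil--Petersson geodesic in $\TS$ joining them lies entirely in $\TsSig$. The three facts are the following.
\begin{itemize}
\item[(a)] The Weil--Petersson metric $\dwp$ on $\TS$ has negative sectional curvature (Wolpert, Tromba). It is not complete, but by Wolpert's result it is \emph{geodesically convex}: any two points of $\TS$ are joined by a unique Weil--Petersson geodesic, and this geodesic lies in $\TS$ rather than in the augmented boundary. Punctures cause no trouble here.
\item[(b)] The extended mapping class group $\mcg(S)$ acts on $\TS$ by Weil--Petersson isometries, including the orientation-reversing classes. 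For an orientation-reversing element the action is anti-holomorphic, but it still preserves the Weil--Petersson Hermitian metric up to complex conjugation. Hence it preserves the underlying Riemannian metric and is an isometry of $\dwp$.
\item[(c)] The fixed-point set $\TsSig$ of the involution induced by $\sigma$ is the set identified in the Preliminaries with $\TN$.
\end{itemize}

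The argument itself is short. Let $X,Y\in\TsSig$, and let $\gamma\colon[0,1]\to\TS$ be the unique Weil--Petersson geodesic from $X$ to $Y$ given by (a). Since $\sigma$ is an isometry by (b), the curve $\sigma\circ\gamma$ is again a geodesic, and its endpoints are $\sigma(X)=X$ and $\sigma(Y)=Y$. By uniqueness, $\sigma\circ\gamma=\gamma$, so every point $\gamma(t)$ is fixed by $\sigma$, that is, $\gamma(t)\in\TsSig$. This is exactly convexity.

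The only real work is checking the inputs, and we expect (a) to be the main obstacle, since it is the deep step. We would not reprove it. Instead we would cite Wolpert's geodesic convexity theorem, which covers punctured surfaces, together with strict negative curvature; uniqueness of geodesics then follows from the standard Cartan--Hadamard-type argument for convex negatively curved spaces.

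A small point also remains for (b). One must see that the involution induced by $\sigma$ on $\TS$ is well defined on equivalence classes, which holds because it is the action $(X,\varphi)\mapsto(X,\varphi\circ\sigma^{-1})$ of an element of $\mcg(S)$. One must also see that it preserves the metric, which follows from the formula $\langle\mu,\nu\rangle=\mathrm{Re}\int_X\mu\bar\nu\,dA$ being invariant under anti-conformal identifications.

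As an optional remark, $\TsSig$ is also closed, being the fixed set of a continuous map, so it is a closed convex subset of $\TS$. In this sense it is a totally geodesic submanifold of $\TS$.
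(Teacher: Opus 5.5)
Your proof is correct and is essentially the paper's argument. Both use Wolpert's result that any two points of $\TS$ are joined by a unique Weil--Petersson geodesic, together with the fact that $\sigma$ acts on $\TS$ as an isometry, to conclude that the geodesic between two $\sigma$-fixed points is $\sigma$-invariant; your version spells out the pointwise step ($\sigma\circ\gamma=\gamma$ as parametrized curves), which the paper leaves implicit.
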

\begin{proof}
If $X,Y\in \TsSig$ and $X\neq Y$, then by Wolpert's result (Remark 5.4 and Corollary~5.9 of \cite{Wolpert1987}), $X$ and $Y$ are connected by a unique geodesic segment $[X,Y]$ in $\TS$. The extended mapping class group $\mcg(S)$ acts via Weil-Petersson isometries on $\TS$. In particular, $\sigma$ induces an isometry of $\TS$.
Since $X$ and $Y$ are fixed by $\sigma$, the same is true for the geodesic segment $[X,Y]$.
\end{proof}
\section{Lifting pants decompositions}
In this section we use the orientable double cover $\map{\pi}{S}{N}$ to define a map
\[\map{\J}{\pantsNV}{\pantsSig},\]
where $\pantsSig$ is the subset of $\pantsSV$ consisting of $\sigma$-invariant pants decompositions of $S$.

\begin{lemma}\label{lem:pi:lift}
    If $T$ is an oriented and connected subsurface of $N$, then $\pi^{-1}(T)$ has exactly two connected components $M$ and $\sigma(M)$, and 
    \[\map{\pi_{|M}}{M}{T}\] 
    is a homeomorphism.
\end{lemma}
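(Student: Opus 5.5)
The plan is to use the standard classification of connected double covers by index-two subgroups of the fundamental group. The covering $\pi\colon S\to N$ is the orientation double cover, so it corresponds to the orientation character
\[w\colon \pi_1(N)\to \zz/2,\]
which sends a loop to $0$ exactly when it preserves local orientation. Restricting to $T$, the cover $\pi^{-1}(T)\to T$ is a double cover, possibly disconnected. Over each component of $T$ it corresponds to the composite
\[\pi_1(T)\to\pi_1(N)\xrightarrow{w}\zz/2,\]
where the first map is induced by inclusion (base points chosen in $T$). A double cover of a connected space is trivial, meaning it has two components each mapped homeomorphically, exactly when this monodromy homomorphism is trivial.

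The key step is to check that $w$ vanishes on the image of $\pi_1(T)$. Since $T$ is orientable, every loop $\gamma$ in $T$ preserves a local orientation of $T$ along $\gamma$. An orientation of $T$ near a point is the same as an orientation of $N$ near that point, because $T$ is a subsurface and so an open subset or a codimension-zero submanifold of $N$. Hence $\gamma$ preserves local orientation of $N$, so $w(\gamma)=0$. I would phrase this concretely: a fixed orientation of $T$ gives a continuous choice of local orientation of $N$ along $T$. By the definition of $S$ as the space of pairs (point, local orientation), this choice is a continuous section $s\colon T\to\pi^{-1}(T)$. Set $M=s(T)$.

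Next I will show $M$ and $\sigma(M)$ are exactly the two components. The deck involution $\sigma$ swaps the two local orientations, so $\sigma(M)$ is the image of the opposite section, which corresponds to the opposite orientation of $T$. The two sets are disjoint and their union is $\pi^{-1}(T)$. Each is open in $\pi^{-1}(T)$, since locally, over an evenly covered neighbourhood, they are the two sheets. Each is connected because it is homeomorphic to the connected surface $T$ via $\pi$, with inverse $s$. Therefore $\pi^{-1}(T)$ has exactly two connected components, $M$ and $\sigma(M)$, and $\pi|_M$ is a homeomorphism with inverse $s$.

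The only delicate point is the continuity and openness of the section, that is, making precise that the orientation of $T$ induces local orientations of $N$ continuously. This is immediate from the local-sheet description of the orientation cover, so I expect no real obstacle. If $T$ is allowed to have boundary or be a closed subsurface, I would apply the argument to its interior, or to a slightly larger open regular neighbourhood, which is still orientable, and then restrict.
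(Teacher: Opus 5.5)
Your proof is correct, but it is organized differently from the paper's. The paper argues by path lifting and contradiction:
\begin{itemize}
\item since $\pi^{-1}(T)\to T$ is a double cover of a connected surface, its components are at most $M$ and $\sigma(M)$;
\item $\pi|_M$ is injective, because a path in $M$ between two distinct points of a fibre would project to a one-sided loop in the orientable surface $T$;
\item injectivity then forces $M\neq\sigma(M)$.
\end{itemize}
This is essentially the hands-on form of your first paragraph, namely that the orientation character vanishes on the image of $\pi_1(T)$.

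You instead use the model of $S$ as the space of local orientations and build an explicit section $s$ from the orientation of $T$. The two components $s(T)$ and $\sigma(s(T))$ are then exhibited directly.

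Each route has an advantage:
\begin{itemize}
\item Yours makes the homeomorphism claim immediate, since $s$ is a continuous inverse of $\pi|_M$. The paper proves only injectivity and leaves implicit that an injective covering map $M\to T$ is a homeomorphism.
\item The paper's is shorter. It needs only path lifting and the fact that one-sided loops are exactly those whose lifts fail to close up. It does not rely on a particular model of the orientation cover or on an auxiliary open neighbourhood of $T$.
\end{itemize}
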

\begin{proof}
    Since $\pi$ has the path lifting property, $\pi^{-1}(T)$ has at most two connected components: $M$ and $\sigma(M)$. Moreover, 
    \[\map{\pi_{|M}}{M}{T}\] 
    is injective. In fact, if $m_1,m_2\in M$ are such that $m_1\neq m_2$ and $\pi(m_1)=\pi(m_2)$, then any path connecting $m_1$ and $m_2$ in $M$ projects to a one-sided loop in $T$, which is not possible. Hence, $M\neq \sigma(M)$.
\end{proof}
\begin{lemma}\label{J:bijection}
    If $P$ is a $\sigma$-invariant pants decomposition of $S$, then $\pi(P)$ is a pants decomposition of $N$.
 If $Q$ is a  pants decomposition of $N$, then $\pi^{-1}(Q)$ is a $\sigma$-invariant pants decomposition of $S$.   
\end{lemma}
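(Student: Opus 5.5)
The plan is to argue locally on complementary components, using Lemma~\ref{lem:pi:lift} as the main tool, together with the classification of what happens to individual curves under the double cover.

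\emph{Step 1: how curves project and lift.} Let $\alpha$ be a curve on $N$.
\begin{itemize}
\item If $\alpha$ is two-sided, a regular neighbourhood of $\alpha$ is an annulus, hence orientable. By Lemma~\ref{lem:pi:lift}, $\pi^{-1}(\alpha)$ consists of two disjoint curves $\tilde\alpha$ and $\sigma(\tilde\alpha)$.
\item If $\alpha$ is one-sided, $\pi^{-1}(\alpha)$ is a single $\sigma$-invariant curve, the boundary-type lift of the M\"obius band.
\end{itemize}
We must check that these lifts are nontrivial and pairwise non-isotopic. Here we will use the standard fact that disjoint curves on $S$ that are isotopic are isotopic through the annulus they cobound, together with the complement analysis below. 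Conversely, if $P$ is $\sigma$-invariant, each $\gamma\in P$ has $\sigma(\gamma)$ either disjoint from $\gamma$ or equal to it. We may choose representatives so that $\sigma$ acts on the multicurve $P$ by a homeomorphism preserving it setwise; for this one can take geodesic representatives for a $\sigma$-invariant hyperbolic metric. Then $\pi(\gamma)$ is a simple closed curve on $N$, and $\pi(P)$ is a multicurve.

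\emph{Step 2: complementary components.} For the second statement, let $Q$ be a pants decomposition of $N$. Each component $T$ of $N\setminus Q$ is a pair of pants, which is orientable. By Lemma~\ref{lem:pi:lift}, $\pi^{-1}(T)$ is two disjoint pairs of pants $M$ and $\sigma(M)$, each homeomorphic to $T$. Hence $S\setminus \pi^{-1}(Q)$ is a union of pairs of pants, and $\pi^{-1}(Q)$ is a multicurve cutting $S$ into pants. Nontriviality and distinctness of the curves follow from this: a trivial curve, or a pair of isotopic curves, would produce a disk, a once-punctured disk or an annulus among the complementary pieces, and all pieces are pants. The set $\pi^{-1}(Q)$ is $\sigma$-invariant by construction.

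For the first statement, let $P$ be $\sigma$-invariant. Then $\sigma$ permutes the pants of $S\setminus P$. No pant $R$ can be fixed by $\sigma$: if $\sigma(R)=R$, then $R/\sigma$ would be a component of $N\setminus\pi(P)$ whose double cover is a pair of pants, and Euler characteristic forces $\chi(R/\sigma)=-1/2$, which is impossible. Actually one should be more careful here, since $\sigma$ is free and $\chi(R)=-1$ is odd, so $R\to R/\sigma$ cannot be a double cover at all. Thus the pants of $S\setminus P$ come in pairs $R,\sigma(R)$, each mapping homeomorphically onto its image. So $N\setminus\pi(P)$ consists of pairs of pants.

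\emph{Step 3: curves of $\pi(P)$ are nontrivial and distinct.} A curve $\pi(\gamma)$ bounding a disk, a once-punctured disk or a M\"obius band in $N$ would lift to curves bounding disks, once-punctured disks or annuli in $S$. The second and third cases need care with the core curve of the M\"obius band. This contradicts the nontriviality of $P$, or shows that $\pi(\gamma)$ would be adjacent to a non-pants component. Similarly, if two curves of $\pi(P)$ were isotopic, they would cobound an annulus component, which is not a pair of pants.

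The main obstacle is Step 1's converse: realizing the $\sigma$-invariant isotopy classes by a $\sigma$-invariant configuration of actual curves, so that the projection is a genuine embedded multicurve. I would handle this via geodesic representatives for a $\sigma$-invariant hyperbolic metric on $S$; such a metric exists, for example by lifting a hyperbolic metric on $N$. Distinct simple geodesics then intersect minimally, $\sigma$ maps geodesics to geodesics, and $\gamma$ and $\sigma(\gamma)$ are either equal or disjoint.
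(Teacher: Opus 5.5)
Your proposal is correct and follows the same route as the paper. The oddness of $\chi=-1$ rules out a $\sigma$-invariant complementary pair of pants of $S\setminus P$, so each one maps homeomorphically onto its image, and Lemma~\ref{lem:pi:lift} lifts each pair of pants of $N\setminus Q$ to two disjoint copies. Your extra steps supply details the paper leaves implicit: realizing $P$ by the geodesics of a $\sigma$-invariant hyperbolic metric, so that $\pi(P)$ is a genuine embedded multicurve, and ruling out trivial or isotopic curves, which is cleanest via additivity of Euler characteristic over the complementary pieces.
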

\begin{proof}
    Since $\pi$ is a local homeomorphism, every component of $\pi(P)$ or $\pi^{-1}(Q)$ is a simple closed curve.
    It remains to show that every component of $N\setminus\pi(P)$ or $S\setminus\pi^{-1}(Q)$ is a pair of pants.

    Let $M$ be a component of $S\setminus P$. We claim that $M$ is disjoint from $\sigma(M)$. Otherwise $\sigma(M)=M$ and $\pi(M)$ is a non-orientable surface of Euler characteristic $\frac{1}{2}\chi(M)=-\frac{1}{2}$, which is impossible. It follows that $\pi|_M$ is a homeomorphism and $\pi(M)$ is a pair of pants.

    Now let $T$ be a component of $N\setminus Q$. By Lemma \ref{lem:pi:lift}, $\pi^{-1}(T)$ has two components, say $M$ and $\sigma(M)$. Moreover, $M$ is a component of $S\setminus\pi^{-1}(Q)$ and is homeomorphic to $T$. It follows that $M$ is a pair of pants.
\end{proof}
Recall that if $Q\in\pants^0(N_{g,n})$ contains $m$ one-sided curves, then
\[|Q|=\frac{3g+m}{2}+n-3.\]
Since every one-sided curve on $N$ lifts to a single curve on $S$, whereas every two-sided curve lifts to a pair of curves, we have
\[|\pi^{-1}(Q)|=2|Q|-m=3(g-1)+2n-3,\]
which is exactly the number of curves in a pants decomposition of $S_{g-1,2n}$.
    


We denote by $\dpn$ and $\dps$ the graph metrics on $\pantsN$ and $\pantsS$ respectively.
The subset $\pantsSig\subset\pantsSV$ is endowed with the metric induced by $\dps$. 

\begin{prop}\label{J:3Lip}
The map 
\[\map{\J}{\pantsNV}{\pantsSig}\]
defined by 
$\J(P)=\pi^{-1}(P)$ is a well defined bijection. Moreover, 
$\J$ is 3-Lipschitz with respect to the metrics $\dpn$ on $\pantsN$ and $\dps$ on $\pantsSig$.
\end{prop}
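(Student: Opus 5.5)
Well-definedness and bijectivity follow from Lemma~\ref{J:bijection}. By that lemma, $\pi^{-1}(Q)$ is a $\sigma$-invariant pants decomposition for every $Q\in\pantsNV$, so $\J$ lands in $\pantsSig$. The map $P\mapsto\pi(P)$ sends $\pantsSig$ back to $\pantsNV$. We have $\pi(\pi^{-1}(Q))=Q$ because $\pi$ is surjective. We also have $\pi^{-1}(\pi(P))=P$ when $P$ is $\sigma$-invariant, since $\pi^{-1}(\pi(\gamma))=\gamma\cup\sigma(\gamma)$ and both of these curves lie in $P$. Hence the two maps are mutually inverse.

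For the Lipschitz bound, it suffices to show the following: if $Q,Q'$ differ by one elementary move in $\pantsN$, then $\dps(\J(Q),\J(Q'))\le 3$. The triangle inequality along a geodesic in $\pantsN$ then gives the general bound. Let $F$ be the support of the move, so $F$ is the nontrivial component of $N\setminus(Q\setminus\{\alpha\})$. Outside $F$ the two decompositions agree. Hence $\J(Q)$ and $\J(Q')$ agree outside $\pi^{-1}(F)$, and it suffices to connect them by a path of moves supported inside $\pi^{-1}(F)$. I will split into the four move types.

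\emph{Types 1 and 2} ($F\cong S_{1,1}$ or $S_{0,4}$, orientable). By Lemma~\ref{lem:pi:lift}, $\pi^{-1}(F)=M\sqcup\sigma(M)$ with $M\cong F$. Here $\alpha$ lifts to $\tilde\alpha\subset M$ and $\sigma(\tilde\alpha)$, and similarly $\beta$ lifts to $\tilde\beta$ and $\sigma(\tilde\beta)$. Replace $\tilde\alpha$ by $\tilde\beta$ in $M$, which is an elementary move in $S$ with support $M$. Then replace $\sigma(\tilde\alpha)$ by $\sigma(\tilde\beta)$. This gives 2 moves.

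\emph{Types 3 and 4} ($F\cong N_{1,2}$ or $N_{2,1}$). The preimage $\tilde F=\pi^{-1}(F)$ is connected, and it is the orientable double cover of $F$. By the Euler characteristic count ($\chi(\tilde F)=2\chi(F)=-2$), $\tilde F$ is $S_{0,4}$ (from $N_{1,2}$) or $S_{1,2}$ (from $N_{2,1}$). Since $\alpha$ and the new curves have minimal intersection in $F$, I will first list explicitly the possible configurations of $\alpha$, $\beta$ (or $\beta_1,\beta_2$), distinguishing one-sided from two-sided curves.

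For $N_{1,2}$ the only curves are one-sided curves and one two-sided curve. If the move replaces a one-sided $\alpha$ by a one-sided $\beta$ with $i(\alpha,\beta)=1$, the lifts are single curves in $S_{0,4}$ meeting twice. So $\J(Q)$ and $\J(Q')$ differ by one move, and the bound is 1.

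For $N_{2,1}$ the preimage is $S_{1,2}$, and $\J(Q)\cap\tilde F$ consists of two curves: either one curve and its $\sigma$-image, or two lifts of one-sided curves. Passing between these configurations may require intermediate decompositions of $S_{1,2}$. The pants graph of $S_{1,2}$ is connected, and the relevant configurations are explicit. I would draw $S_{1,2}$ with $\sigma$ as a reflection-type involution and exhibit a path of length at most 3 for each case, for instance when $\alpha$ is two-sided and is replaced by a pair $\beta_1,\beta_2$ of one-sided curves. The $N_{1,2}$ case with $\alpha$ two-sided and a type 4 move should reduce similarly.

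The main obstacle is this case analysis in $\pi^{-1}(N_{2,1})\cong S_{1,2}$. There the lifts of $\alpha$ and of $\beta$ (or $\beta_1\cup\beta_2$) may intersect more than the minimal amount for a single move in $S$, so a short path must be found by hand, and verifying that 3 moves suffice in the worst configuration is where the constant $3$ comes from.
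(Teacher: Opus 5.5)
Your overall strategy is the paper's: reduce to a single elementary move, lift its support, and treat the four move types separately. Your bijectivity argument and your handling of types 1, 2 and 3 are essentially the paper's. The gap is the type 4 case, which you only announce (``I would draw $S_{1,2}$ \ldots\ and exhibit a path of length at most 3''). Types 1--3 cost at most two moves, so this case is the entire source of the constant $3$. As written, the Lipschitz bound is therefore not proved.

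The missing argument is short. In the support $F\cong N_{2,1}$ there is only one configuration up to homeomorphism. Here $\alpha$ is the unique essential two-sided curve, and $\beta_1,\beta_2$ are disjoint one-sided curves with $i(\alpha,\beta_k)=1$. In $\pi^{-1}(F)\cong S_{1,2}$, $\alpha$ lifts to disjoint curves $\kre\alpha_1$ and $\kre\alpha_2=\sigma(\kre\alpha_1)$, while each $\beta_k$ lifts to a single $\sigma$-invariant curve $\kre\beta_k$. The two preimages of $\alpha\cap\beta_k$ are exchanged by $\sigma$, so $i(\kre\alpha_j,\kre\beta_k)=1$ for all $j,k$. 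Thus the two lifted decompositions share no curve and no mixed pair is disjoint, so they are at distance at least $3$. The obstruction is not excess intersection, as you suspected. Let $\delta$ be the boundary of a regular neighbourhood of $\kre\alpha_1\cup\kre\beta_1$. It is separating and cuts off a one-holed torus. Each of $\kre\alpha_2,\kre\beta_2$ crosses it exactly twice, since each is disjoint from one of $\kre\alpha_1,\kre\beta_1$ and meets the other once. This gives the path
\[\{\kre\alpha_1,\kre\alpha_2\}\to\{\kre\alpha_1,\delta\}\to\{\kre\beta_1,\delta\}\to\{\kre\beta_1,\kre\beta_2\}\]
of moves of types 2, 1, 2. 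Up to relabelling, this is the only kind of length-$3$ path available, and it is what the paper's Figure~3 depicts.

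Two smaller corrections. First, $N_{1,2}$ contains no essential two-sided curve: the only candidate bounds a M\"obius band, which the paper's convention excludes. So type 3 moves are only between one-sided curves. Second, there is no ``$N_{1,2}$ case with a type 4 move'', since type 4 moves are supported on $N_{2,1}$.
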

\begin{proof}
The first part of the statement follows from Lemma \ref{J:bijection}. 

Let $P,P'\in \pantsN$ be two pants decompositions such that $\dpn(P,P')=1$. It is enough to prove that $\dps(j(P),j(P'))\leq 3$.

If $P$ and $P'$ differ by a move of type 1 (see Figure \ref{fig:moves:all})
\[\alpha \mvI \beta, \]
then, by Lemma \ref{lem:pi:lift}, the support $N(\alpha\cup\beta)\simeq S_{1,1}$ of this move lifts to two disjoint copies 
\[N\left(\kre{\alpha}_1\cup \kre{\beta}_1\right)\amalg N\left(\kre{\alpha}_2\cup \kre{\beta}_2\right)\]
of $N(\alpha\cup\beta)$. 
Therefore, we can connect $\J(P)$ and $\J(P')$ by a path of length 2 in $\pantsS$:
\[\kre{\alpha}_1,\kre{\beta}_1\mvI \kre{\alpha}_1,\kre{\beta}_2\mvI \kre{\alpha}_2,\kre{\beta}_2.\]
The situation is completely analogous if $P$ and $P'$ differ by a move of type 2 -- in this case the path connecting $\J(P)$ and $\J(P')$ consists of two moves of type 2.

If $P$ and $P'$ are connected by a move of type 3
\[\alpha \mvIII \beta, \]
then the lift of the support $N(\alpha\cup\beta)\simeq N_{1,2}$ of this move is the surface $N(\kre{\alpha}\cup \kre{\beta})$ homeomorphic to a 4-holed sphere $S_{0,4}$ -- see Figure \ref{fig:lift:mv3}. 
    \begin{figure}[h]
\begin{center}
\includegraphics[width=0.49\customwidth]{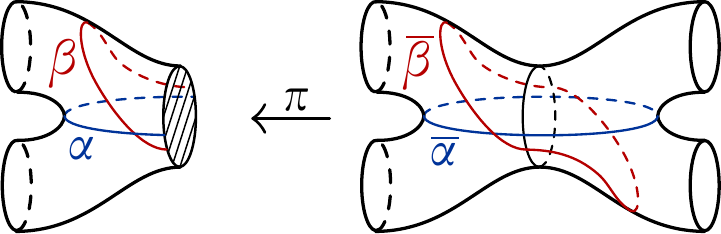}
\caption{The lift of an elementary move of type 3 as a move of type 2 in $\pantsS$.}\label{fig:lift:mv3} %
\end{center}
\end{figure}
In this case $\J(P)$ and $\J(P')$ differ by a single move of type 2.

Finally, if $P$ and $P'$ are connected by a move of type 4
\[\alpha \mvIV \beta_1,\beta_2, \]
then the lift of the support $N(\alpha\cup\beta_1\cup \beta_2)\simeq N_{2,1}$ is the surface $N(\kre{\alpha}_1\cup\kre{\alpha}_2\cup\kre{\beta}_1\cup\kre{\beta}_2)$ homeomorphic to a 2-holed tours $S_{1,2}$. In this case $\J(P)$ and $\J(P')$ can be connected by path of length 3 -- see Figure \ref{fig:lift:mv4}. 
\begin{figure}[h]
\begin{center}
\includegraphics[width=0.63\customwidth]{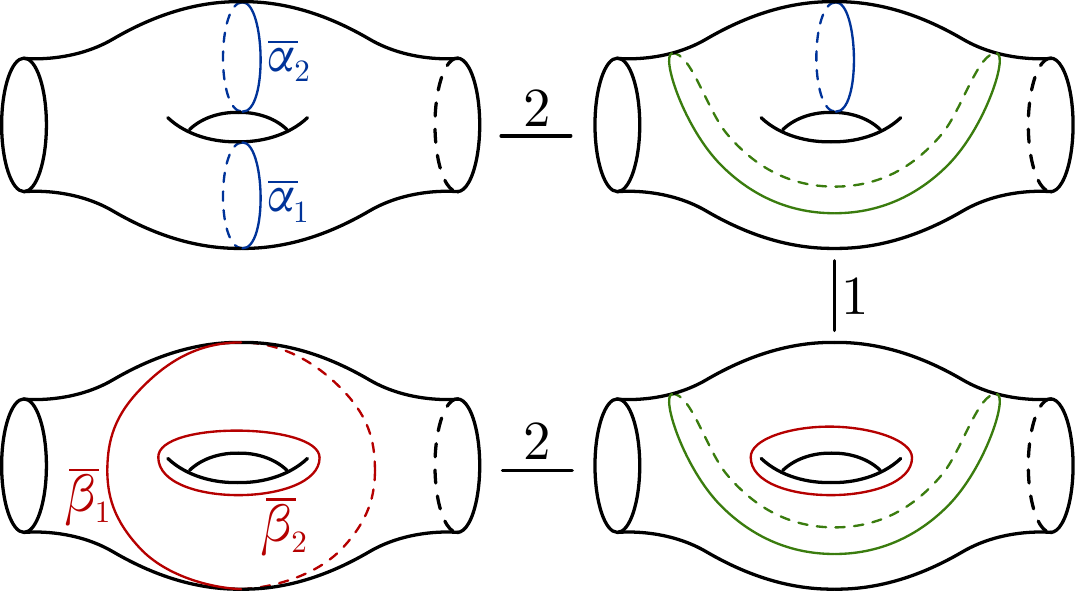}
\caption{The lift of an elementary move of type 4 as a path of length 3 in $\pantsS$.}\label{fig:lift:mv4} %
\end{center}
\end{figure}
\end{proof}

\section{Symmetric sublevel sets}

As in \cite{Brock2003}, for each pants decomposition $P\in \pantsSg$ and $\ell\in\rr_+$, we associate the \emph{sublevel set}
\[V_{\ell}(P)=\left\{ X\in \TSg \st \max_{\alpha\in P}\{\ell_{X}(\alpha)\}<\ell \right\},\]
where $\ell_{X}(\alpha)$ denotes the length of the geodesic representative of $\alpha$ in the hyperbolic metric on $X$.
By a theorem of Bers, there exists a constant $B$, depending only  on the topological type of $S$, such that 
the collection $\left\{V_{B}(P)\right\}_{P\in\pantsSg}$ forms a cover of $\TSg$.

We require an analogous construction for the subspace $\TsgSig\subset \TSg$ of symmetric hyperbolic metrics on $S$. Instead of a  Bers constant, we use the constant $L$ provided by the following theorem.
\begin{theorem}[Theorem 4.8 of \cite{Seppala1991}] There exits a constant $L>0$, depending only on the topological type of $S$, such that for every $X\in \TsgSig$ there exits a pants decomposition $P\in \pantsSgSig$
satisfying $\ell_X(\alpha)<L$ for every $\alpha\in P$. \label{Bers-const}
\end{theorem}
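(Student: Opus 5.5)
The statement is a symmetric Bers constant: every $\sigma$-invariant hyperbolic metric $X$ on $S$ should admit a $\sigma$-invariant pants decomposition whose curves all have length bounded by a constant that depends only on the topology. My plan is to push the problem down to $N$ and apply a Bers-type argument there. By the identification of $\TsSig$ with $\TN$, a point $X\in\TsSig$ is a hyperbolic metric on $S$ for which $\sigma$ acts as an isometry. It therefore descends to a complete finite-area hyperbolic metric $Y$ on $N$ with $\pi\colon X\to Y$ a local isometry.

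The first step is to show that $Y$ has a pants decomposition $Q$ all of whose curves have length at most a constant $L_0$, depending only on $(g,n)$. I would run Bers' inductive argument directly on $N$. Take a shortest nontrivial simple closed geodesic and cut along it. At each subsequent stage, the area of every component of the complement is fixed by Gauss--Bonnet as $2\pi|\chi|$. Hence any complementary component that is not yet a pair of pants contains an embedded disk of bounded radius away from the boundary collars. This gives a curve of bounded length that is nonperipheral in that component. The curve may be one-sided; that is allowed, since the definition of pants decomposition on $N$ permits one-sided curves, and cutting along a one-sided geodesic still lowers complexity.

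The second step adjusts lengths for the lift. By Lemma~\ref{J:bijection}, $P=\pi^{-1}(Q)$ is a $\sigma$-invariant pants decomposition of $S$. A two-sided curve $\gamma$ of length $\ell$ lifts to two geodesics of length $\ell$. A one-sided curve lifts to a single geodesic of length $2\ell$. So $L=2L_0+1$ works.

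The main obstacle is making the Bers step rigorous on non-orientable pieces. Specifically, one must show that a non-pants component with geodesic boundary of bounded length contains a short essential curve that is not a boundary curve and does not bound a Möbius band. My first attempt would be to apply the orientable Bers argument on the double cover of each piece. Failing that, I would use the standard collar and area estimate: take a maximal embedded ball around a point at distance at least a fixed amount from the boundary. Its radius is bounded by the area, so a geodesic loop of bounded length exists, and its associated simple curve (or the boundary of its neighbourhood) gives a bounded-length curve. Non-peripherality then follows because the piece is not a pair of pants.
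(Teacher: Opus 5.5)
The paper gives no proof of this statement. It imports it from Sepp\"al\"a (Theorem~4.8 of \cite{Seppala1991}, stated for closed surfaces) and only asserts that the argument extends to punctured surfaces. Your descent of $X$ to $Y$ on $N$ and your lifting step are correct: Lemma~\ref{J:bijection} makes $\pi^{-1}(Q)$ a $\sigma$-invariant pants decomposition, and lifted lengths are $\ell$ or $2\ell$.

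Read backwards, however, the same lemma shows that projecting a short $\sigma$-invariant decomposition of $X$ gives a short decomposition of $Y$. Pairs of lifts project to two-sided curves of the same length, and invariant curves project to one-sided curves of half the length. So your Step~1 is \emph{equivalent} to the theorem. Passing to $N$ is the right move, since it removes the real obstruction upstairs: a short curve $\gamma$ on $X$ may cross $\sigma(\gamma)$. But it leaves all the content in a Bers theorem for non-orientable surfaces with one-sided curves allowed, and that is exactly what your sketch does not establish.

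As written, the inductive step fails in three ways.
\begin{itemize}
\item A point at a fixed distance from $\partial F$ can lie deep inside the wide collar of a very short boundary geodesic, or deep in a cusp, where the shortest loop is peripheral. Your inference ``non-peripherality follows because the piece is not a pair of pants'' is a non sequitur: that hypothesis says non-peripheral curves exist, not that the loop you found is one.
\item The area bound controls the radius of a disk only while the disk stays inside $F$, and the disk may reach $\partial F$ before it stops being embedded. In fact no bound independent of $\ell(\partial F)$ can hold. A one-holed torus with boundary length $\Lambda$ whose three shortest simple closed geodesics have equal length has every non-peripheral simple closed geodesic of length at least roughly $\Lambda/3$. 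So any argument extracting a bounded curve from a loop at an interior point must be wrong.
\item Your fallback of applying the orientable Bers theorem on the double cover of a piece is circular. The short curve upstairs need not be invariant, and its projection need not be simple, which is the original difficulty.
\end{itemize}

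What is missing is Buser's boundary-expansion argument (Ch.~5 of \cite{BuserBook}). Grow the equidistant neighbourhood of $\partial F$ until it first touches itself at time $t_0$; cusps are handled the same way via horocyclic neighbourhoods. With $\Lambda=\ell(\partial F)$, area gives $\Lambda\sinh t_0\le 2\pi|\chi(F)|$. Hence the frontier has length $\Lambda\cosh t_0\le\Lambda+2\pi|\chi(F)|$. This bounds the curves built from the contact orthogeodesic and the boundary curves it meets, however short $\partial F$ is.

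The contact arc can behave in three ways:
\begin{itemize}
\item it joins two boundary components;
\item it returns to one component through an untwisted band;
\item it returns through a twisted band.
\end{itemize}
The last, genuinely non-orientable, case produces a one-holed M\"obius band in $F$ containing a short one-sided curve. A case check then gives a nontrivial new curve, provided any curve that bounds a M\"obius band is replaced by its core, which has half the length. Iterating yields $L_0=L_0(g,n)$. With this lemma supplied, your route would give a self-contained proof, including the punctured case that the paper only asserts.
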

The above theorem is stated in \cite{Seppala1991} for closed surfaces, but its proof extends without essential changes to punctured surfaces.
By increasing $L$, if necessary, we may assume $L\ge B$, where $B$ is a Bers contant for $S$. 
Throughout the paper, $L$ will always denote such a constant.
\begin{defi} If $\ell\in\rr_+$ and $P\in \pantsSgSig$, then we define the \emph{symmetric sublevel set}
\[V_{\ell}^\sigma(P)=V_{\ell}(P)\cap \TsgSig.\]
\end{defi}
\begin{prop}\label{sublevel:prop}
 If $\ell\in\rr_+$ and $P\in \pantsSgSig$, then 
 \begin{enumerate}
  \item each $V_{\ell}^\sigma(P)$ is convex in the Weil-Petersson metric, and
  \item there is a constant $D(\ell)>0$, depending only on $S$ and $\ell$, for which
  \[\diam_{WP}(V_{\ell}^\sigma(P))<D(\ell). \]
 \end{enumerate}
 \end{prop}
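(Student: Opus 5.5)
Both parts will be deduced from the corresponding facts for the ordinary sublevel sets $V_\ell(P)$ in $\TS$, which Brock establishes, combined with Proposition~\ref{Ts:sigma:convex}. The key input is that $V^\sigma_\ell(P)$ is the intersection of $V_\ell(P)$ with the convex set $\TsSig$. Since $\sigma$-invariance of $P$ plays no role in the metric arguments, I will treat $P$ simply as an element of $\pantsSV$ wherever Brock's statements are invoked.

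For (1), the plan is to use Wolpert's convexity of length functions along Weil--Petersson geodesics. For each curve $\alpha$, the function $X\mapsto \ell_X(\alpha)$ is strictly convex along WP geodesics in $\TS$. Hence for $X,Y\in V_\ell(P)$ and every $\alpha\in P$, the function $\ell_{\cdot}(\alpha)$ restricted to the unique geodesic $[X,Y]$ is bounded by $\max\{\ell_X(\alpha),\ell_Y(\alpha)\}<\ell$. Therefore $[X,Y]\subset V_\ell(P)$, and $V_\ell(P)$ is convex; this is exactly the argument in \cite{Brock2003}. Now let $X,Y\in V^\sigma_\ell(P)$. By Proposition~\ref{Ts:sigma:convex}, the geodesic $[X,Y]$ lies in $\TsSig$, and by the previous observation it lies in $V_\ell(P)$, so it lies in the intersection. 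One subtlety needs attention: Wolpert's result gives uniqueness and existence of geodesics in the (incomplete) space $\TS$, as already used in the proof of Proposition~\ref{Ts:sigma:convex}. I would quote that directly, so that "convex" is understood in the sense that the unique geodesic segment stays inside.

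For (2), the strategy is monotonicity of diameter under inclusion. Since $V^\sigma_\ell(P)\subset V_\ell(P)$ and the metric on $\TsSig$ is the restriction of $\dwp$, it suffices to bound $\diam_{WP}(V_\ell(P))$ by a constant depending only on $S$ and $\ell$. Brock proves such a bound by an argument that runs as follows.
\begin{enumerate}
\item Using Wolpert's estimate, the WP distance from any $X$ to the stratum of the augmented Teichm\"uller space where all curves of $P$ are pinched is at most a constant times $\sum_{\alpha\in P}\ell_X(\alpha)^{1/2}$, which is less than $c\,|P|\,\ell^{1/2}$.
\item That stratum, the maximally noded surface for $P$, is a single point, because the complement consists of triply punctured spheres.
\end{enumerate}
So any two points of $V_\ell(P)$ are within $2c|P|\ell^{1/2}$ of each other in the completion. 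Since the metric completion's distance restricted to $\TS$ agrees with $\dwp$ (and $\TS$ is a geodesic space whose completion is CAT(0)), this gives the bound $D(\ell)$.

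The main obstacle is step (2) of Brock's argument, namely justifying that distances through the completion are the same as intrinsic distances in $\TS$. This is standard: the completion is the completion of the length space $\TS$. Because we only need the intersection with $\TsSig$ and the induced metric, no further work is required beyond invoking Brock's lemma for $V_\ell(P)$ with $\ell$ arbitrary. For punctured $S$, I would remark, as the paper does for Brock's theorem, that Wolpert's estimate and the stratum description hold verbatim.
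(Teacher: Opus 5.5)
Your proposal is correct and follows the paper's route. Part (1) holds because $V^\sigma_\ell(P)$ is the intersection of the convex sets $V_\ell(P)$ and $\TsSig$, and part (2) follows by monotonicity from Brock's bound on $\diam_{WP}(V_\ell(P))$; the paper simply cites Brock's Proposition~2.2 for both properties of $V_\ell(P)$ instead of re-sketching them via Wolpert, and the ``obstacle'' you flag is a non-issue because $\TS$ embeds isometrically in its metric completion.
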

 \begin{proof}
By Proposition 2.2 of \cite{Brock2003}, the set $V_{\ell}(P)$ is convex in the Weil-Petersson metric, and there exists a constant $D(\ell)$, depending only on $S$ and $\ell$, such that 
\[\diam_{WP}(V_{\ell}(P))<D(\ell).\]
Since
\[V_{\ell}^\sigma(P)=V_{\ell}(P)\cap \TsgSig,\]
we have
\[\diam_{WP}(V_{\ell}^\sigma(P))\le\diam_{WP}(V_{\ell}(P))<D(\ell), \]
which proves (2). By Proposition~\ref{Ts:sigma:convex},
$\TsgSig$ is convex in the Weil--Petersson metric. Hence
$V_{\ell}^\sigma(P)$, being the intersection of two convex sets, is convex as
well, proving (1).
 \end{proof}
%
\begin{lemma}\label{lem:a}
For every $L'>0$, there exists an integer $K=K(L')>0$, depending only on $S$ and $L'$, with the following property:
if $P,P'\in\pantsNV$ satisfy
\[V_{L'}(\J(P))\cap V_{L'}(\J(P'))\neq \emptyset,\]
then $\dpn(P,P')\leqslant K$.
\end{lemma}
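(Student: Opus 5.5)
Since $V_{L'}(\J(P))\cap V_{L'}(\J(P'))\neq\emptyset$, the idea is to use a finiteness argument modulo the mapping class group of $N$. Pick $X\in\TS$ in the intersection, so every curve of $\J(P)$ and of $\J(P')$ has $X$-length less than $L'$. First, I would replace $X$ by a symmetric point, although this may not be strictly necessary. The cleaner route is purely topological: by the collar lemma, two curves of length less than $L'$ on the same hyperbolic surface have geometric intersection number bounded by a constant $I=I(L',S)$. Hence for every $\bar\alpha\in\J(P)$ and $\bar\beta\in\J(P')$ we get $i(\bar\alpha,\bar\beta)\le I$. Projecting, for $\alpha\in P$ and $\beta\in P'$ we have $i(\alpha,\beta)\le i(\bar\alpha_1\cup\bar\alpha_2,\bar\beta_1\cup\bar\beta_2)\le 4I$, since intersections downstairs lift to intersections upstairs. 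So the total intersection number $i(P,P')$ is bounded by a constant $I'$ depending only on $L'$ and $S$, using that $|P|,|P'|$ are bounded in terms of the topology of $N$.

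Next I would show the set of pairs $(P,P')$ with $i(P,P')\le I'$ is finite modulo the diagonal action of $\mcg(N)$. There are finitely many $\mcg(N)$-orbits of pants decompositions $P$ of $N$, corresponding to the finitely many topological types of gluing patterns of pants and one-sided curves. For a fixed representative $P_0$, curves meeting $P_0$ at most $I'$ times are determined up to the stabiliser of $P_0$, essentially by Dehn twists about two-sided curves of $P_0$. Dehn–Thurston-type coordinates give that intersection numbers with $P_0$ determine curves up to twists, and the number of configurations of $P'$ with bounded intersection with $P_0$, modulo twisting along $P_0$, is finite. Thus there are finitely many orbits of such pairs.

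Since $\mcg(N)$ acts on $\pantsN$ by graph automorphisms, $\dpn(P,P')$ is constant on orbits of pairs. The graph $\pantsN$ is connected (Papadopoulos–Penner), so each of the finitely many orbit representatives has finite distance. Taking $K$ to be their maximum completes the proof.

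The main obstacle is the finiteness step in the second paragraph, especially handling one-sided curves, where Dehn–Thurston coordinates are less standard. To avoid it, I would lift the problem: $\J(P),\J(P')$ are $\sigma$-invariant with bounded intersection. Finiteness of such pairs modulo $\mcg^\sigma(S)$ follows from finiteness modulo $\mcg(S)$, because within one $\mcg(S)$-orbit of pairs only finitely many $\mcg^\sigma(S)$-classes can be $\sigma$-invariant. This is itself delicate, so I would more likely argue directly on $N$ by cutting along $P_0$ and counting arc systems in each pair of pants with bounded endpoints, up to twists, which is a finite count.
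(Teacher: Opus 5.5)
Your proposal is correct and follows essentially the paper's route: a collar-lemma bound on $i(\J(P),\J(P'))$ at a common point $X$, transfer of that bound to $i(P,P')$ on $N$, finiteness of the possible $P'$ modulo twists along the two-sided curves of $P$ together with the finitely many topological types of $P$ (your diagonal $\mcg(N)$-orbit count of pairs), and then a maximum of finitely many distances in the connected graph $\pantsN$. The one step worth making explicit is the transfer to $N$: geodesic representatives for a hyperbolic metric on $N$ lift to geodesics for the corresponding $\sigma$-invariant metric on $S$, so the lifts are in minimal position and $i(\pi^{-1}(\alpha),\pi^{-1}(\beta))=2\,i(\alpha,\beta)$, which justifies your inequality.
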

\begin{proof}
 We adapt the proof of Lemma 3.3 in \cite{Brock2003}.

 If $V_{L'}(\J(P))\cap V_{L'}(\J(P'))\neq \emptyset$, then there exists $X\in\TS$ such that $\ell_X(\alpha)<L'$ for each $\alpha\in P\cup P'$. Then, by 
 the collar lemma (Theorem 4.4.6 of \cite{BuserBook}), there exists $\eps>0$ such that the geodesic realization of $P$ on $X$ has an embedded metric $\eps$-collar on $X$. This implies that there exists a constant $C$ bounding the total intersection number
 \[i(\J(P),\J(P'))\leq C.\]
 The same bound holds in $N$
 \begin{equation}\label{eq:i:bound}
  i(P,P')\leq C.
 \end{equation}
Now we consider the action 
\[T(P)\times \pantsNV\to \pantsNV\]
of the group $T(P)$ generated by Dehn twists about two-sided curves in $P$. Functions
\begin{align*}
\map{i(P,\cdot)}{\pantsNV}{\zz}\\
\map{\dpn(P,\cdot)}{\pantsNV}{\zz}
\end{align*}
factor through this action and descend to functions
\begin{align*}
\map{i(P,\cdot)}{\pantsNV/T(P)}{\zz}\\
\map{\dpn(P,\cdot)}{\pantsNV/T(P)}{\zz}.
\end{align*}
There is a finite set
\[[P_1],\ldots,[P_c]\]
of equivalence classes in $\pantsNV/T(P)$ satisfying $i(P,[P_k])\leq C$ and by \eqref{eq:i:bound}, we are interested only in such classes. So we take 
\[K_P=\max_{1\leq k\leq c}\dpn(P,[P_k]),\]
and then take $K$ to be the maximum of $K_P$ over all possible topological types of $P$.
\end{proof}
\begin{lemma}\label{lem:b}
Given $L'>L$, where $L$ is the constant from Theorem \ref{Bers-const}, there exists an integer $J=J(L')>0$, depending only on $S$ and $L'$, with the following property: if $X_t$, $t\in [0,1]$, is a Weil-Petersson geodesic segment of length $1$  joining $X_0, X_1\in \TsgSig$, then there exist  symmetric pants decompositions $P_1,\ldots, P_J\in \pantsSgSig$ such  that
\[\{X_t\st t\in[0,1]\}\subset V_{L'}(P_1)\cup\ldots\cup V_{L'}(P_J).\]
\end{lemma}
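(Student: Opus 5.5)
Since the segment is a Weil--Petersson geodesic joining two points of $\TsgSig$, Proposition~\ref{Ts:sigma:convex} shows that every point $X_t$, $t\in[0,1]$, lies in $\TsgSig$. Theorem~\ref{Bers-const} then applies to each $X_t$: it gives a symmetric pants decomposition $P_t\in\pantsSgSig$ with $\ell_{X_t}(\alpha)<L$ for every $\alpha\in P_t$, so $X_t\in V^\sigma_L(P_t)$. It remains to show that boundedly many of the sets $V_{L'}(P_t)$ cover the whole segment. Following Brock's Lemma 3.3, I will deduce this from a uniform continuity property of length functions along Weil--Petersson segments of bounded length.

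The key step is a uniform Lipschitz-type estimate: there is a $\delta=\delta(L,L')>0$, depending only on $S$, $L$ and $L'$, such that if $\ell_{X}(\alpha)<L$ and $\dwp(X,Y)<\delta$, then $\ell_Y(\alpha)<L'$. I would get this from Wolpert's gradient bound for length functions. The WP-gradient of $\ell_\alpha^{1/2}$ is uniformly bounded, i.e.\ $\|\nabla \ell_\alpha^{1/2}\|_{WP}$ is bounded above by a universal constant $c_0$. This yields
\[
\bigl|\ell_Y(\alpha)^{1/2}-\ell_X(\alpha)^{1/2}\bigr|\le c_0\,\dwp(X,Y).
\]
So it suffices to choose $\delta$ with $(L^{1/2}+c_0\delta)^2<L'$. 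This choice is possible precisely because $L'>L$, and this is where that hypothesis is used.

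Next I subdivide $[0,1]$ into $J=\lceil 1/\delta\rceil+1$ points $t_1,\dots,t_J$, spaced so that every $t\in[0,1]$ satisfies $|t-t_k|\le\delta$ for some $k$. Since the segment has length $1$ and is parametrized proportionally to arc length, $\dwp(X_t,X_{t_k})\le\delta$. Set $P_k=P_{t_k}\in\pantsSgSig$. Every $\alpha\in P_k$ has $\ell_{X_{t_k}}(\alpha)<L$, so the estimate gives $\ell_{X_t}(\alpha)<L'$, that is, $X_t\in V_{L'}(P_k)$. Hence the segment lies in $V_{L'}(P_1)\cup\dots\cup V_{L'}(P_J)$, and $J$ depends only on $S$ and $L'$, because $L$ depends only on $S$.

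The main obstacle is justifying the uniform estimate on $\TS$, which is not compact, in the punctured setting. Wolpert's bound on $\|\nabla\ell_\alpha^{1/2}\|$ (and Riera's formula) is stated for general finite-area surfaces, so I would cite it directly. Alternatively, I would argue by a compactness argument in the augmented Teichmüller space modulo $\mcg(S)$, as in Brock. In either case I need to take care with the segment meeting the completion strata; this is harmless because the open segment between interior points lies in $\TS$ by Wolpert's geodesic convexity.
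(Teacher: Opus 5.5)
Your proof follows the same route as the paper's. The segment lies in $\TsSig$ by Proposition~\ref{Ts:sigma:convex}, and Sepp\"al\"a's theorem puts each $X_t$ in some $V_L(P)$ with $P$ symmetric. A uniform $\eps=\eps(L,L')$ then keeps the segment inside $V_{L'}(P)$ for WP-time $\eps$, and subdividing $[0,1]$ gives $J\approx 1/\eps$. The only difference is that the paper cites Brock for $\eps$, whereas you derive it from Wolpert's gradient estimates.

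That derivation needs one correction. $\|\nabla\ell_\alpha^{1/2}\|_{WP}$ is not bounded by a universal constant: it is unbounded as $\ell_\alpha\to\infty$, and Wolpert's estimates control it only in terms of an upper bound for $\ell_\alpha$. So the global inequality $\bigl|\ell_Y(\alpha)^{1/2}-\ell_X(\alpha)^{1/2}\bigr|\le c_0\,\dwp(X,Y)$ is false as stated.

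The fix is routine. Take $c_0=c_0(L')$ bounding the gradient on the region $\{\ell_\alpha\le L'\}$. Then argue by first exit along the subsegment from $X_{t_k}$ to $X_t$: if $(L^{1/2}+c_0\delta)^2<L'$, the length cannot reach $L'$ within WP-distance $\delta$. This gives exactly the conclusion $\ell_{X_t}(\alpha)<L'$ you need, with $\delta$ and $J$ still depending only on $S$ and $L'$.
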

\begin{proof}
    The statement is a minor reformulation of Lemma 3.4 in \cite{Brock2003}, so we only sketch the proof -- for details see \cite{Brock2003}.

    By Theorem \ref{Bers-const}, 
\begin{equation}\label{lem34:cover}
  \{X_t\st t\in[0,1]\}\subset V_L(P_1)\cup\ldots\cup V_L(P_m)  
\end{equation}
for some $P_1,\ldots,P_m\in \pantsSgSig$. It is shown in \cite{Brock2003}, that there exists a constant $\eps$, depending only on $S$, $L$ and $L'$, such that if $X_{t_0}\in V_L(P)$, then 
\begin{equation}\label{lem34:eqEps}
X_t\in V_{L'}(P),\quad \text{for $t\in (t_0-\eps,t_0+\eps)$}.    
\end{equation}
Now, take $J=\left\lceil \frac{2}{\eps_0}\right\rceil$, and for each $a_i=i\cdot \frac{\eps}{2}$, $i=0,\ldots, J$, let $P_{k(i)}$ be such an element of the cover \eqref{lem34:cover} that $X_{a_i}\in P_{k(i)}$. By \eqref{lem34:eqEps}, 
\[\{X_t\st t\in[0,1]\}\subset V_{L'}(P_{k(1)})\cup\ldots\cup V_{L'}(P_{k(J)}). \]
\end{proof}
\section{Proofs of the main theorems}
We define a map
\[\map{\Q}{\pantsSgSig}{\TsgSig},\]
by the condition
\[\Q(P)\in V^\sigma_L(P),\]
where $L$ is the constant from Theorem \ref{Bers-const}. We will show that the composition
\[\map{\Q\circ \J}{\pantsNV}{\TsgSig}\]
is a quasi-isometry, establishing Theorem \ref{main_pantsN_Teich}.

\begin{lemma}\label{Lipschitz6d}
    The map $\Q\circ\J$ 
is $6D$-Lipschitz, and its image is $D$-dense in $\TsgSig$, where $D=D(L)$ is the constant from Proposition \ref{sublevel:prop}.
\end{lemma}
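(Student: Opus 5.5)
Since both the metric on $\pantsN$ and the graph metric $\dps$ are path metrics on integer-valued graphs, it suffices for the Lipschitz bound to treat adjacent vertices. Fix $P,P'\in\pantsNV$ with $\dpn(P,P')=1$. By Proposition~\ref{J:3Lip}, $\J(P)$ and $\J(P')$ are joined by a path $\J(P)=Q_0,Q_1,\ldots,Q_k=\J(P')$ in $\pantsS$ with $k\le 3$. The intermediate vertices $Q_i$ need not be $\sigma$-invariant, so $\Q$ is not defined on them. I will therefore use Brock's (non-symmetric) sublevel sets, which do not require symmetry.

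For consecutive pants decompositions $Q_{i},Q_{i+1}$ differing by an elementary move in $S$, I need a point of $V_L(Q_i)\cap V_L(Q_{i+1})$. The plan is to choose a hyperbolic structure on $S$ in which all curves of $Q_i\cup Q_{i+1}$ are short (say of length at most $B\le L$). This is the standard observation in Brock's Lemma 3.3. Concretely, take Fenchel--Nielsen coordinates adapted to $Q_i$ with all lengths small; the new curve in the move lies in a one-holed torus or four-holed sphere with short boundary, and its length can be made bounded by a universal constant by choosing the twist and interior length appropriately. Enlarging $L$ if needed (we already have the freedom $L\ge B$), the sets $V_L(Q_i)$ and $V_L(Q_{i+1})$ intersect. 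Since both $\Q(P)\in V_L(\J(P))$ and $\Q(P')\in V_L(\J(P'))$, the triangle inequality over the chain $\Q(P)=Y_0$, points $Z_i\in V_L(Q_i)\cap V_L(Q_{i+1})$, $\Q(P')$ gives
\[\dwp(\Q\J(P),\Q\J(P'))\le \sum_{i=0}^{k}\diam_{WP} V_L(Q_i)\le (k+1)D\le 4D\le 6D,\]
using Brock's Proposition 2.2 bound $\diam_{WP}V_L(Q)<D(L)$ for arbitrary $Q\in\pantsSV$, the same constant as in Proposition~\ref{sublevel:prop}. A cruder count (two diameters per move) gives exactly $2k D\le 6D$, matching the statement. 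For general $P,P'$, concatenating along a $\dpn$-geodesic yields the $6D$-Lipschitz bound.

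Density is easier. Given $X\in\TsSig$, Theorem~\ref{Bers-const} provides $P\in\pantsSig$ with $X\in V^\sigma_L(P)$. By Proposition~\ref{J:3Lip}, $P=\J(P_0)$ for some $P_0\in\pantsNV$, and $\Q(P)\in V_L^\sigma(P)$ as well. Proposition~\ref{sublevel:prop}(2) then gives $\dwp(X,\Q\J(P_0))<D$.

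The main obstacle is the first step: ensuring that each non-symmetric intermediate vertex and each pair of adjacent vertices have intersecting sublevel sets at level $L$. This requires the quantitative fact that adjacent pants decompositions admit a common short structure. I will take that from Brock's paper rather than reprove it.
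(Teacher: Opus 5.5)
Your proof is correct and follows the paper's argument. The paper views $\Phi$ as the restriction of Brock's map $\overline{\Phi}\colon\pantsSV\to\TS$, which is $2D$-Lipschitz because sublevel sets of adjacent pants decompositions intersect, and composes it with the $3$-Lipschitz map $\J$; you unpack that same fact along the path of length at most $3$ from Proposition~\ref{J:3Lip}, which gives the slightly sharper constant $4D$, and your density argument via Theorem~\ref{Bers-const}, surjectivity of $\J$ and Proposition~\ref{sublevel:prop}(2) is identical to the paper's.
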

\begin{proof}
    By Proposition \ref{J:3Lip}, the map 
    \[\map{\J}{\pantsNV}{\pantsSgSig}\]
    is 3-Lipschitz, and $\Q$
    is a restriction of the map
    \[\map{\kre{\Q}}{\pantsSV}{\TSg}\]
    defined by the condition
    \[\kre{\Q}(P)\in V_L(P).\]
    Brock proved in \cite{Brock2003} that $\kre{\Q}$ is $2D$-Lipschitz, hence $\Q\circ \J$ is $6D$-Lipschitz.

    By Theorem \ref{Bers-const} and surjectivity of $\J$,
\[\TsgSig = \sum_{P\in \pantsSgSig}V^\sigma_L(P)=\sum_{Q\in \pantsNV}V^\sigma_L(j(Q)),\]
and by Proposition \ref{sublevel:prop}, $\diam_{WP}(V^\sigma_L(j(Q)))<D$ for each $Q\in\pantsNV$. Since each $V^\sigma_L(j(Q))$ intersects the image of~$\Q\circ\J$, this image is $D$-dense.
\end{proof}

Now we are ready to prove Theorems \ref{main_pantsN_Teich} and \ref{main_pantsN_pantsS}.
\begin{proof}[Proof of Theorem \ref{main_pantsN_Teich}]
We will show that 
 \[\map{\Q\circ \J}{\pantsNV}{\TsgSig}\]
 is a quasi-isometry. By Lemma \ref{Lipschitz6d}, the image of $\Q\circ \J$
  is $D$-dense in $\TsgSig$, so it is enough to prove that $\Q\circ \J$ is a quasi-isometric embedding. Therefore, we need to find two constants $A_1\ge 1$ and $A_2\ge 0$ such that 
\begin{equation}\label{qi:embedding}
    \frac{1}{A_1}\cdot \dpn(P,P')-A_2\leq \dwp(\Q\circ \J(P),\Q\circ \J(P'))\leq 
    A_1\cdot \dpn(P,P')+A_2
\end{equation}
for any $P,P'\in\pantsNV$.
Lemma \ref{Lipschitz6d} implies that the second inequality of \eqref{qi:embedding} is satisfied for any $A_2\ge 0$ and $A_1\ge 6D$.

For the first inequality, we follow the proof of Theorem 3.2 in \cite{Brock2003}. If $X=\Q\circ \J(P)$ and $X'=\Q\circ \J(P')$, then Proposition \ref{J:3Lip} and Lemma \ref{lem:b} imply that there exists a sequence $\{P_i\}_{i=0}^{m}\in\pantsNV$ of pants decompositions such that
\begin{enumerate}
    \item $P_0=P$, $P_m=P'$ and $\{V_{2L}(j(P_i))\}_{i=0}^{m}$ covers the geodesic segment joining $X$ and $X'$ in $\TS$;
    \item $V_{2L}(\J(P_i))\cap V_{2L}(\J(P_{i+1}))\neq \emptyset$, for $i=0,\ldots,m-1$;
    \item if $J=J(2L)$ is a constant provided by Lemma \ref{lem:b}, then 
    \begin{equation}\label{eq:main:proof:A}
    m\le \left(\dwp(X,X')+1\right)\cdot J.
    \end{equation}
\end{enumerate}
Lemma \ref{lem:a}, (1) and (2) imply that 
\begin{equation}\label{eq:main:proof:B}
  \dpn(P,P')\le K\cdot m,  
\end{equation}
where $K=K(2L)$ is a constant provided by Lemma \ref{lem:a}. Combining inequalities \eqref{eq:main:proof:A} and \eqref{eq:main:proof:B}, we get
\[\frac{\dpn(P,P')}{K\cdot J}-1\le \dwp(X,X').\]
Therefore, the first inequality of \eqref{qi:embedding} is satisfied if one takes $A_2\ge 1$ and $A_1\geq K\cdot J$.
\end{proof}
\begin{proof}[Proof of Theorem \ref{main_pantsN_pantsS}]
By Proposition \ref{J:3Lip}, the map 
\[\map{j}{\pantsNV}{\pantsSgSig}\]
is a bijection. Moreover, for $P,P'\in\pantsNV$ we have
\[ \frac{1}{2DA_1}\cdot \dpn(P,P')-\frac{A_2}{2D}\leq \dps(\J(P),\J(P'))\le 3\cdot \dpn(P,P'),\]
 where $A_1$, $A_2$ are the constants from \eqref{qi:embedding} and $D=D(L)$ is the constant from Proposition \ref{sublevel:prop}.
 The first inequality follows from \eqref{qi:embedding} and the fact that $\Phi$ is $2D$-Lipschitz (see the proof of Lemma~\ref{Lipschitz6d}). The second one is a consequence of Proposition~\ref{J:3Lip}.      
\end{proof}

\end{document}